\newtheorem{theorem}{Theorem}[section]
\newtheorem{corollary}[theorem]{Corollary}
\theoremstyle{definition}
\theoremstyle{remark}
\newtheorem{remark}[theorem]{Remark}
\numberwithin{equation}{section}
\newcommand{\g}[2]{\mbox{$\langle #1 ,#2 \rangle$}}
\newcommand{\s}[1]{\mbox{$\mathbb{S}^{#1}$}}
\newcommand{\h}[1]{\mbox{$\mathbb{H}^{#1}$}}
\newcommand{\R}[1]{\mbox{$\mathbb{R}^{#1}$}}
\newcommand{\Z}{\mbox{$\mathbb{Z}$}}
\newcommand{\m}{\mbox{$\Sigma$}}
\newcommand{\n}{\mbox{$M$}}
\newcommand{\nablabar}{\mbox{$\overline{\nabla}$}}
\newcommand{\fm}{\mbox{$\mathcal{C}^\infty(\m)$}}
\newcommand{\x}{\mbox{$\psi:\m^2\fle\n^3$}}
\newcommand{\xhom}{\mbox{$\psi:\m^2\fle\E3$}}
\newcommand{\fle}{\mbox{$\rightarrow$}}
\newcommand{\rf}[1]{\mbox{(\ref{#1})}}
\def\area{\mathop\mathrm{Area(\m)}\nolimits}
\def\tr{\mathop\mathrm{tr}\nolimits}
\def\Kbar{\mathop{\overline{K}}\nolimits}
\def\KbarSigma{\mathop{\overline{K}_\Sigma}\nolimits}
\def\RiemE{\mathop\mathrm{\overline{R}}\nolimits}
\def\Ric{\mathop\mathrm{\overline{Ric}}\nolimits}
\def\RicN{\mathop\mathrm{\overline{Ric}(\textit{N},\textit{N})}\nolimits}
\def\xfrak{\mathfrak{X}}
\def\xfrakM{\xfrak(\m)}
\def\E3{\mbox{$\mathbb{E}^3(\kappa,\tau)$}}
\def\sB{\mbox{$\mathbb{S}^3_b(\kappa,\tau)$}}
\def\nil{\mbox{$Nil_3(\tau)$}}
\def\univC{\mbox{$\widetilde{Sl(2,\R{})}(\kappa,\tau)$}}
\def\sL{\mbox{$Sl(2,\R{})(\kappa,\tau)$}}
\def\integ{\int_\Sigma}
\begin{document}

\title[On the first stability eigenvalue of CMC surfaces]
{On the first stability eigenvalue of constant mean curvature surfaces into homogeneous 3-manifolds}

\author{Luis J. Al\'ias}
\address{Departamento de Matem\'{a}ticas, Universidad de Murcia, E-30100 Espinardo, Murcia, Spain}
\email{ljalias@um.es}
\thanks{This work was partially supported by MECD (Ministerio de Educaci\'on, Cultura y Deporte) Grant  no FPU12/02252, MINECO (Ministerio de Econom\'{\i}a y Competitividad) and FEDER (Fondo Europeo de Desarrollo Regional) project MTM2012-34037 and Fundaci\'{o}n S\'{e}neca project 04540/GERM/06, Spain. This research is a result of the activity developed within the framework of the Programme in Support of Excellence Groups of the Regi\'{o}n de Murcia, Spain, by Fundaci\'{o}n S\'{e}neca, Regional Agency for Science and Technology (Regional Plan for Science and Technology 2007-2010).}

\author{Miguel A. Mero\~no}
\address{Departamento de Matem\'{a}ticas, Universidad de Murcia, E-30100 Espinardo, Murcia, Spain}
\email{mamb@um.es}

\author{Irene Ortiz}
\address{Departamento de Matem\'{a}ticas, Universidad de Murcia, E-30100 Espinardo, Murcia, Spain}
\email{irene.ortiz@um.es}

\subjclass[2010]{53C42, 53C30}

\date{\today}



\begin{abstract}
We find out upper bounds for the first eigenvalue of the stability operator for compact constant
mean curvature surfaces immersed into certain 3-dimensional Riemannian spaces, in particular into homogeneous
3-manifolds. As an application we derive some consequences for strongly stable surfaces in such ambient spaces.
Moreover, we also get a characterization of Hopf tori in certain Berger spheres.
\end{abstract}

\maketitle

\section{Introduction}
\label{s1}

Let \x\ be a compact surface immersed into a 3-dimensional Riemannian space. We will assume that
\m\ is \textit{two-sided}, which means that there exists a unit normal vector field $N$ globally
defined on $\m$, and will denote by $A$ its second fundamental form (with respect to $N$) and by
$H$ its mean curvature, $H=(1/2)\tr(A)$. Every smooth function $f\in\fm$ induces a normal variation
$\psi_t$ of the immersion $\psi$, with variational normal field $fN$ and first variation of the
area functional $\mathcal{A}(t)$ given by
\[
\delta_f\mathcal{A}=\frac{d\mathcal{A}}{dt}(0)=-2\integ fH.
\]
As a consequence, minimal surfaces ($H=0$) are characterized as critical points of the area
functional whereas constant mean curvature (CMC) surfaces are critical points of the area functional
restricted to smooth functions $f$ which satisfy the additional condition $\integ f=0$.
Geometrically, such additional condition means that the va\-riations under consideration preserve a
certain volume function.

For such critical points, the stability of the corresponding variational problem is given by the
second variation of the area functional,
\[
\delta^2_f\mathcal{A}=\frac{d^2\mathcal{A}}{dt^2}(0)=-\integ  fJf,
\]
with $Jf=\Delta f+\left(|A|^2+\RicN\right)f$, where $\Delta$ stands for the Laplacian operator on
\m\ and $\Ric$ denotes the Ricci curvature of $\n^3$. The surface $\m$ is said to be strongly
stable if $\delta^2_f\mathcal{A}\geq 0$, for every $f\in\fm$. The operator $J=\Delta+|A|^2+\RicN$
is called the Jacobi or stability operator of the surface, and it is a Schr\"odinger operator. As
is well known, the spectrum of $J$
\[
\mathrm{Spec}(J)=\{ \lambda_1<\lambda_2<\lambda_3<\cdots \}
\]
consists of an increasing sequence of eigenvalues $\lambda_k$ with finite multiplicities $m_k$ and
such that $\lim_{k\rightarrow\infty}\lambda_k=+\infty$. Moreover, the first eigenvalue is simple
($m_1=1$) and it satisfies the following min-max characterization
\begin{equation}
\label{minmax} \lambda_1=\min \left\{\frac{-\integ  fJ(f)}{\integ  f^2}: \quad f\in\fm,f\neq0
\right\}.
\end{equation}
In terms of the spectrum, $\m$ is strongly stable if and only if $\lambda_1\geq 0$.

Observe that with our criterion, a real number $\lambda$ is an eigenvalue of $J$ if and only if
$J(f)+\lambda f=0$ for some smooth function $f\in\fm$, $f\neq0$.

In 1968, Simons \cite{Sim} found out an estimate for the first eigenvalue of $J$ on any compact
minimal hypersurface in the standard sphere. In particular, for minimal surfaces in the 3-sphere he
proved that $\lambda_1=-2$ if the surface is totally geodesic and $\lambda_1\leq -4$ otherwise.
Later on, Wu \cite{Wu} characterized the equality by showing that it holds only for the minimal
Clifford torus. More recently, Perdomo \cite{Pe} gave a new proof of this spectral characterization
by getting an interesting formula that relates the first eigenvalue $\lambda_1$, the genus of the
surface, the area and a simple invariant. Finally,  Al\'ias, Barros and Brasil \cite{ABBr} extended
Wu and Perdomo's results to the case of CMC hypersurfaces in the standard
sphere, characterizing some CMC Clifford tori.

The standard 3-sphere is a simply connected space form (the compact one). Besides them, the most regular Riemannian 3-manifolds are the homogeneous ones and between them, the only compact are the Berger spheres and their quotients. In
the last years, the CMC surfaces of the homogeneous Riemannian 3-manifolds have been deeply studied (the starting point was the work of Abresch and Rosenberg \cite{AR}) and several stability results have been proved (see
\cite{ToUrStability} and \cite{S}).

In this paper, we look for estimates for $\lambda_1$ and some characterizations for compact
CMC surfaces into 3-dimensional Riemannian spaces with sectional curvature
bounded from below. In particular, we find out upper bounds for $\lambda_1$ for compact constant
mean curvature surfaces into homogeneous 3-manifolds.
As an application we derive some consequences for strongly stable surfaces in such ambient spaces.
Moreover, we get also a characterization of Hopf tori in certain Berger spheres and into the product $\s{2}\times\s{1}$.

\section{First results}

Let \x\ be a compact two-sided surface with constant mean curvature $H$ immersed into a
3-dimensional Riemannian space. Let us introduce the so called traceless second fundamental form of
$\m$, that is, the tensor $\phi$ given by $\phi=A-HI$, where $I$ denotes the identity operator on
$\xfrakM$. Observe that $\tr(\phi)=0$ and $|\phi|^2=|A|^2-2H^2\geq0$, with equality if and only if
\m\ is totally umbilical. For that reason $\phi$ is also called the total umbilicity tensor of
$\m$. In terms of $\phi$, the Jacobi operator is given by
\[
J=\Delta+|\phi|^2+2H^2+\RicN.
\]

Now let us choose a first positive eigenfunction $\rho\in\fm$
of the stability operator. Thus $J\rho=-\lambda_1\rho$ or, equivalently,
\begin{equation}
\label{deltarho} \Delta\rho=-\left(\lambda_1+|A|^2+\RicN\right)\rho.
\end{equation}
Extending Perdomo's ideas \cite[Section 3]{Pe} to our more general case, one can compute
\[
\Delta
\textrm{log}\rho=\rho^{-1}\Delta\rho-\rho^{-2}|\nabla\rho|^2=-\left(\lambda_1+|A|^2+\RicN\right)-\rho^{-2}|\nabla\rho|^2,
\]
and integrate on \m\ to find
\[
\alpha=\integ\rho^{-2}|\nabla\rho|^2=-\lambda_1\area-\integ\left(|A|^2+\RicN\right),
\]
where $\alpha\geq0$ defines a simple invariant that is independent of the choice of $\rho$ because
$\lambda_1$ is simple. In other words
\[
\lambda_1=-\frac{1}{\area}\left(\alpha+\integ\left(|A|^2+\RicN\right)\right).
\]
Now from the Gauss equation, we obtain a relation between the norm of the shape operator $|A|^2$,
the sectional curvature $\KbarSigma$ of the tangent plane to $\m$ in $\n^3$, and the Gaussian
curvature $K$ of the surface as $|A|^2=2(2H^2+\KbarSigma-K)$ and, by the Gauss-Bonnet Theorem, the
above formula becomes
\begin{equation}
\label{lambda1}
\lambda_1=-4H^2-\frac{1}{\area}\left(\alpha+8\pi(g-1)+\integ\left(2\KbarSigma+\RicN\right)\right).
\end{equation}

As a first consequence, we can establish the following result.

\begin{theorem}
\label{th1.1} Let $\n^3$ be a 3-dimensional Riemannian space with sectional curvature $\Kbar$
bounded from below by a constant c. Consider $\m^2$ a compact two-sided surface immersed into
$\n^3$ with constant mean curvature $H$, and let $\lambda_1$ stand for the first eigenvalue of its
Jacobi operator. Then
\begin{itemize}
\item[(i)] $\lambda_1\leq-2(H^2+c)$, with equality if and only if \m\ is totally umbilic in
$\n^3$ and the normal direction to \m\ is a direction of minimum Ricci curvature of $\n^3$ equals $2c$,
\item[(ii)] $\lambda_1\leq -4(H^2+c)-8\pi(g-1)/\area$, where g denotes the genus of $\m$.
Moreover, equality holds if and only if \m\ has constant Gaussian curvature, $\KbarSigma=c$ and the
normal direction to \m\ is a direction of minimum Ricci curvature of $\n^3$ equals $2c$.
\end{itemize}
\end{theorem}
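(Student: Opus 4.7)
The plan is to apply the two identities derived in the passage preceding the theorem statement: the intermediate identity
\[
\lambda_1\,\area = -\alpha - \integ\bigl(|A|^2 + \RicN\bigr),
\]
and its Gauss--Bonnet reformulation \rf{lambda1}. Each is exactly one inequality away from the desired bound, so the strategy is simply to control the relevant integrands pointwise from below using the hypothesis $\Kbar\geq c$.

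First I would record the pointwise ingredients. At any point $p\in\m$, choosing an orthonormal frame $\{e_1,e_2\}$ of $T_p\m$ and using that $\n^3$ is $3$-dimensional gives $\RicN(p) = \Kbar(N,e_1) + \Kbar(N,e_2) \geq 2c$, with equality exactly when $N(p)$ is a direction realizing the minimum Ricci curvature of $\n^3$ at $p$ and this minimum equals $2c$. To this I add the algebraic fact $|A|^2 \geq 2H^2$ (equality iff umbilical), the non-negativity $\alpha\geq 0$, and the hypothesis $\KbarSigma \geq c$.

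For part (i), I substitute $|A|^2\geq 2H^2$ and $\RicN\geq 2c$ into the first identity and discard the non-negative $\alpha$ to obtain
\[
\lambda_1\,\area \leq -\integ\bigl(2H^2 + 2c\bigr) = -2(H^2+c)\,\area.
\]
Equality forces $\alpha=0$, $|A|^2\equiv 2H^2$ on $\m$ (total umbilicity), and $\RicN\equiv 2c$ pointwise, which is precisely the stated characterization. For part (ii), the same two bounds applied to \rf{lambda1} give $\integ(2\KbarSigma+\RicN) \geq 4c\,\area$ and hence
\[
\lambda_1 \leq -4(H^2+c) - \frac{8\pi(g-1)}{\area}.
\]

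The only non-routine point is the equality case in (ii): the inequalities force $\alpha=0$, $\KbarSigma\equiv c$, and $\RicN\equiv 2c$, but I still need constant Gaussian curvature. From $\alpha = \integ\rho^{-2}|\nabla\rho|^2 = 0$ the first eigenfunction $\rho$ is constant, so \rf{deltarho} reduces to $|A|^2 + \RicN \equiv -\lambda_1$; combined with $\RicN\equiv 2c$, this makes $|A|^2$ constant on $\m$. Plugging into the Gauss equation $|A|^2 = 2(2H^2 + \KbarSigma - K)$ with $\KbarSigma\equiv c$ then gives $K$ constant, and the converse is immediate by substitution. This extraction of the constancy of $K$ from $\alpha=0$ via the eigenfunction is the only step requiring more than a direct pointwise comparison; everything else is routine inequality-chasing in the two identities already established.
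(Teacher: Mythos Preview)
Your argument is correct and follows the paper's line almost exactly; the only difference is that for part~(i) the paper plugs the constant test function $f=1$ directly into the min-max characterization \rf{minmax}, whereas you reach the same integral inequality via the $\alpha$-identity. These are equivalent (discarding $\alpha\geq 0$ is the same as saying $1$ achieves the min-max), but your route leaves you with an apparently extra equality condition $\alpha=0$ that is not in the stated characterization, and you do not address the converse for (i). You should either remark that $\alpha=0$ is forced once $\m$ is totally umbilic with $\RicN\equiv 2c$, or simply check the converse directly as the paper does: under those hypotheses $J=\Delta+2H^2+2c$, so $\lambda_1=-2(H^2+c)$. Part~(ii), including the extraction of constant $K$ from $\alpha=0$ via \rf{deltarho}, matches the paper verbatim.
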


\begin{proof}
Taking the constant function $f=1$ as a test function in \rf{minmax} to estimate $\lambda_1$, one
easily gets that
\begin{eqnarray}
\label{lambda1.1}
\nonumber \lambda_1&\leq&\frac{-\integ 1J1}{\integ
1^2}=-2H^2-\frac{1}{\area}\integ\RicN-\frac{1}{\area}\integ|\phi|^2\\
&\leq& -2(H^2+c)-\frac{1}{\area}\integ|\phi|^2\leq-2(H^2+c)
\end{eqnarray}
Moreover, if the equality holds then $\phi=0$ on $\m$ and  $\RicN=2c$. So $\m$ is totally umbilic and because of $2c\leq\Ric(X,X)$ for all
$X\in\xfrak(\n^3)$, the normal direction to \m\ is a direction of minimum Ricci curvature of
$\n^3$. Conversely, if $\m$ is totally umbilic and $\RicN=2c$ then $J=\Delta+2H^2+2c$ and so $\lambda_1=-2H^2-2c$.

On the other hand, from \rf{lambda1} we get that $\lambda_1\leq -4(H^2+c)-8\pi(g-1)/\area$ since
$2\KbarSigma+\RicN\geq 4c$ and $\alpha\geq 0$, which proves the first statement of part (ii).
Moreover, if the equality holds then $\alpha=0$, $\KbarSigma=c$ and $\RicN=2c$, so the normal
direction to \m\ is a direction of minimum Ricci curvature of $\n^3$ as above. The fact that
$\alpha=0$ implies $\rho$ is constant and from \rf{deltarho} we see that $|A|^2$ is also constant,
which means that $K$ is constant by the Gauss equation. Conversely, under such hypothesis we have
$J=\Delta+4(H^2+c)-2K$, hence $\lambda_1=-4(H^2+c)+2K=-4(H^2+c)-8\pi(g-1)/\area$ by the
Gauss-Bonnet formula.
\end{proof}


In particular, as a direct consequence of this theorem, we have the following corollary in
3-dimensional simply connected space forms: the Euclidean space $\R{3}$, the hyperbolic space
$\h{3}(c)$, and the standard sphere $\s{3}(c)$; for this last case the result was achieved before
(see \cite{ABBr}).

\begin{corollary}
\label{th1.2} Let $\m^2$ be a compact two-sided surface with constant mean curvature $H$ immersed
into a 3-dimensional simply connected space form $\n^3(c)$, and let $\lambda_1$ stand for the first
eigenvalue of its Jacobi operator. Then
\begin{itemize}
\item[(i)] either $\lambda_1=-2(H^2+c)$ (and \m\ is totally umbilic in $\n^3(c)$), or
\item[(ii)] $\lambda_1\leq -4(H^2+c)$, with equality if and only if $\m^2$ is a Clifford torus in $\s{3}(c)$.
\end{itemize}
\end{corollary}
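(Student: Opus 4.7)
The plan is to specialize Theorem \ref{th1.1} to the ambient $\n^3(c)$, a simply connected 3-dimensional space form, in which every sectional curvature equals $c$, so both $\RicN=2c$ and $\KbarSigma=c$ hold automatically. Theorem \ref{th1.1}(i) immediately yields $\lambda_1\leq-2(H^2+c)$, with equality exactly when \m\ is totally umbilical (the Ricci-direction condition being automatic in this setting), which gives alternative (i). Assuming from now on that we are not in alternative (i), so \m\ is not totally umbilical, Theorem \ref{th1.1}(ii) gives $\lambda_1\leq -4(H^2+c)-8\pi(g-1)/\area$. To pass to $\lambda_1\leq -4(H^2+c)$ I would invoke the classical Hopf-Chern theorem: every compact CMC immersion of a topological $2$-sphere into a simply connected space form is totally umbilical; since \m\ is not totally umbilical this forces $g\geq 1$, and the desired inequality follows.

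For the equality case $\lambda_1=-4(H^2+c)$, the equality clause of Theorem \ref{th1.1}(ii) already provides $\KbarSigma=c$ (extrinsic flatness) and $K$ constant on \m. Together with $g\geq 1$ and Gauss-Bonnet, this forces $g=1$ and $K\equiv 0$, so \m\ is flat both intrinsically and extrinsically. To locate the ambient, I would rule out $c\leq 0$ via a farthest-point argument: in \R{3} (respectively $\h{3}(c)$ with $c<0$) the squared distance from a fixed point (resp.\ $\cosh$ of the distance) attains a maximum on \m, and the resulting negative-semidefinite Hessian condition yields a strictly positive lower bound on $\det(A)$ incompatible with $K=c+\det(A)\equiv 0$. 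Hence $c>0$ and \m\ is a flat CMC torus in $\s{3}(c)$, which is a Clifford torus by the classical Bianchi-Spivak classification of flat surfaces in the $3$-sphere. Conversely, on any Clifford torus $K\equiv 0$, so the Gauss equation gives $|A|^2=4H^2+2c$ and $J=\Delta+4(H^2+c)$, whence $\lambda_1=-4(H^2+c)$.

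The main obstacle is less any individual analytic estimate --- all the spectral content is already contained in Theorem \ref{th1.1} --- than assembling three classical ingredients cleanly: the Hopf-Chern theorem, the non-existence of compact extrinsically flat surfaces in nonpositively curved space forms, and the Bianchi-Spivak classification of flat tori in $\s{3}$, so that the equality case of (ii) is forced to be a Clifford torus.
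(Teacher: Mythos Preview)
Your proposal is correct and follows essentially the same route as the paper's own proof: specialize Theorem~\ref{th1.1} to the constant-curvature setting, invoke Hopf's theorem to force $g\geq 1$ in the non-umbilical case, and then read off $g=1$ and $K\equiv 0$ from the equality clause via Gauss--Bonnet; the paper simply asserts the final identification with the Clifford torus, whereas you spell out the exclusion of $c\leq 0$ and the classification in $\s{3}(c)$. One small correction: the Bianchi--Spivak description of flat surfaces in $\s{3}$ is not the right tool here, since it produces many flat tori that are \emph{not} Clifford tori; the clean argument is that $K\equiv 0$ together with $H$ constant and the Gauss equation $K=c+k_1k_2$ forces both principal curvatures to be constant, so $\m$ is isoparametric in $\s{3}(c)$ and hence a Clifford torus.
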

\begin{proof}
Since $\Kbar\equiv c$ we have $\RicN=2c$, so the normal direction of $\m$ is a direction of minimum
Ricci curvature of $\n^{3}(c)$. Now, if $\m$ is totally umbilic we know from Theorem \ref{th1.1}
that $\lambda_1=-2(H^2+c)$. Otherwise, by using the fact that the genus of a constant mean
curvature non totally umbilic surface in $\n^3(c)$ is greater than or equal to 1 we obtain
$\lambda_1\leq -4(H^2+c)$. Moreover, equality holds if and only if $g=1$ and $\m$ has constant
Gaussian curvature, so by the Gauss-Bonnet formula it must be $K=0$. This occurs only when $\m^2$
is a Clifford torus in $\s{3}(c)$.
\end{proof}

We can also derive the following consequence for strongly stable CMC surfaces.

\begin{corollary}
\label{th1.3} Let $\n^3$ be a 3-dimensional Riemannian space with sectional curvature $\Kbar$
bounded from below by a constant c.
\begin{itemize}
\item[(i)] There exists no strongly stable CMC surface with $H^2+c>0$.
\item[(ii)] If $\m^2$ is a strongly stable CMC surface and $H^2+c=0$ (that is, $c=0$ and $H=0$ or $c<0$ and $H^2=-c$),
then $\m^2$ is topologically either a sphere or a torus.
\item[(iii)] If $\m^2$ is a strongly stable CMC surface and $H^2+c<0$ (that is, $c<0$ and $H^2<-c$),
then
\[
\area|H^2+c|\geq 2\pi(g-1).
\]
\end{itemize}
\end{corollary}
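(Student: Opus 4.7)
The plan is to apply Theorem~\ref{th1.1} directly, using the hypothesis $\lambda_1\geq 0$ (equivalent to strong stability) to convert the upper bounds for $\lambda_1$ into the desired geometric/topological conclusions. All three parts follow from a single idea, just with different choices of which bound of Theorem~\ref{th1.1} is sharpest in each regime.

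For part~(i), I would use the cheaper bound Theorem~\ref{th1.1}(i), which gives $\lambda_1\leq -2(H^2+c)$. If $H^2+c>0$, this already forces $\lambda_1<0$, contradicting strong stability; hence no such surface can exist. For part~(ii), since $H^2+c=0$ the bound in part~(i) becomes trivial ($\lambda_1\leq 0$), so I would instead invoke Theorem~\ref{th1.1}(ii), which now reads
\[
\lambda_1\leq -\frac{8\pi(g-1)}{\area}.
\]
Combining with $\lambda_1\geq 0$ yields $g\leq 1$, so $\m^2$ is topologically a sphere or a torus. For part~(iii), I would again use Theorem~\ref{th1.1}(ii), but now the term $-4(H^2+c)$ is strictly positive. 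The strong stability assumption $\lambda_1\geq 0$ gives
\[
0\leq -4(H^2+c)-\frac{8\pi(g-1)}{\area}=4|H^2+c|-\frac{8\pi(g-1)}{\area},
\]
and rearranging produces exactly $\area\,|H^2+c|\geq 2\pi(g-1)$.

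There is essentially no obstacle here: once Theorem~\ref{th1.1} is in hand, the corollary is a purely algebraic consequence of sign-tracking in the two upper bounds $\lambda_1\leq -2(H^2+c)$ and $\lambda_1\leq -4(H^2+c)-8\pi(g-1)/\area$. The only mild subtlety is choosing which bound to apply in which case: bound~(i) is wasteful when $H^2+c\leq 0$ because it loses the topological term, while bound~(ii) is wasteful when $H^2+c>0$ and $g=0$ because it can be weaker than bound~(i). Sorting the three regimes $H^2+c>0$, $H^2+c=0$, $H^2+c<0$ and matching each to the sharper estimate is really all the argument requires.
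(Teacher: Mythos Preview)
Your proposal is correct and matches the paper's own argument essentially verbatim: the paper likewise derives item~(i) from Theorem~\ref{th1.1}(i) and items~(ii)--(iii) from Theorem~\ref{th1.1}(ii), with the same sign-tracking you describe.
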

The proof of item (i) above is a direct application of the estimate for $\lambda_1$ given in item
(i) of Theorem \ref{th1.1}, while items (ii) and (iii) follow from the estimate for $\lambda_1$
given in item (ii) of Theorem \ref{th1.1}.

\section{Surfaces in homogeneous 3-manifolds}
\label{s2}



From now on, we will focus our attention on the study of compact CMC surfaces
into homogeneous Riemannian 3-manifolds whose isometry group has dimension
4. So, if $\n^3$ is a homogeneous Riemannian 3-manifold, it is well known that there exists a Riemannian submersion $\Pi:\n^3\fle B^2(\kappa)$, where $B^2(\kappa)$ is a 2-dimensional simply connected space form of constant curvature $\kappa$, with totally geodesic fibers and there exists a unit Killing field $\xi$ on $\n^3$ which is vertical respect to $\Pi$. If  $\nablabar$ stands for the Levi-Civita connection of $\n^3$, we have
\begin{equation}
\label{tau}
\nablabar_X\xi=\tau(X\wedge\xi),
\end{equation}
for all vector fields $X$ on $\n^3$, where
$\wedge$ is the vector product in $\n^3$ and the constant $\tau$ is the bundle curvature (see  \cite{Dan} for details). As the
isometry group of $\n^3$ has dimension 4, $\kappa-4\tau^2\neq0$. We will denote such manifolds and certain quotients by $\E3$.
Depending on $\tau$ and $\kappa$, we can distinguish the different cases:

\begin{enumerate}
\item[(i)] When $\tau=0$, $\E3$ is the product $B^2(\kappa)\times\R{}$, that is, up to scaling, the
spaces $\s{2}(\kappa)\times\R{}$ for $\kappa>0$, and $\h{2}(\kappa)\times\R{}$ for $\kappa<0$, and their quotients $B^2(\kappa)\times\s{1}$.

\item[(ii)] When $\tau\neq0$, $\E3$ is one of the Berger spheres $\sB$ for $\kappa>0$, the Heisenberg group $\nil$ for $\kappa=0$ and the universal cover $\univC$ of the Lie group $\sL$ for $\kappa<0$, and their quotients $\sB/\Z_n$ and $PSl(2,\R{})=\sL/\Z_n$, $n\geq 2$.
\end{enumerate}


The submersion $\Pi:\E3\fle B^2(\kappa)$ allows us to get a very interesting example of surface. So, if $\gamma$ is any regular curve in $B^2(\kappa)$ we know that $\m=\Pi^{-1}(\gamma)$ is a surface in
$\E3$ with $\xi$ a tangent vector field, i.e., $\g{N}{\xi}\equiv0$. From \rf{tau} it follows that $\xi$ is
a parallel vector field on $\m$ and hence $\m$ is a flat surface. We will call $\Pi^{-1}(\gamma)$
a Hopf surface of $\E3$. If $\gamma$ is a closed curve, $\Pi^{-1}(\gamma)$ is a Hopf cylinder, and additionally if  $\Pi$ is a circle Riemannian submersion, $\Pi^{-1}(\gamma)$ is a Hopf torus. Let us observe that a Hopf torus $\Pi^{-1}(\gamma)$ has constant mean curvature if and only if the curve $\gamma$ has constant curvature.

In this section we start the study of the first stability eigenvalue of any compact CMC surface $\xhom$. According to \rf{lambda1}, we have to consider the curvature tensor of any $\E3$. It is well known (see \cite{Dan}) that the Riemannian curvature tensor $\RiemE$ of $\E3$ is given by
\begin{eqnarray*}
\g{\RiemE(X,Y)Z}{W}&=&(\kappa-3\tau^2)\{\g{Y}{Z}\g{X}{W}-\g{X}{Z}\g{Y}{W}\}+\\
&+&(\kappa-4\tau^2)\{\g{X}{\xi}\g{Z}{\xi}\g{Y}{W}-\g{Y}{\xi}\g{Z}{\xi}\g{X}{W}+\\
&+&\g{X}{Z}\g{Y}{\xi}\g{\xi}{W}-\g{Y}{Z}\g{X}{\xi}\g{\xi}{W}\},
\end{eqnarray*}
for all vector fields $X$, $Y$, $Z$ and $W$ on $\E3$.
As a consequence, the Ricci curvature of $\E3$ is given by
\begin{equation}
\label{Ric}
\Ric(X,X)=\kappa-2\tau^2+\g{X}{\xi}^2(4\tau^2-\kappa),
\end{equation}
for every unit vector field $X$ on $\E3$. Moreover, the sectional curvature $\Kbar$ of any plane
$P$ is
\[
\Kbar(P)=\tau^2+\g{\nu}{\xi}^2(\kappa-4\tau^2),
\]
where $\nu$ is the normal to $P$.

A direct computation using the last two formulae shows that
\[
\label{2K+Ric} 2\KbarSigma+\RicN=\kappa+\g{N}{\xi}^2(\kappa-4\tau^2).
\]
Therefore, equation \rf{lambda1} reduces to
\begin{equation}
\label{lambda1Hom}
\lambda_1=-4H^2-\kappa-\frac{1}{\area}\left(\alpha+8\pi(g-1)+(\kappa-4\tau^2)\integ\g{N}{\xi}^2\right).
\end{equation}

Now, as a direct application of the above equation we can get upper
bounds for $\lambda_1$ for compact CMC surfaces into the different homogeneous spaces $\E3$.
 Moreover, in some cases we characterize the equality.

\begin{theorem}
\label{ThS2R}
Let $\psi:\m^2\fle\s{2}(\kappa)\times\R{}$ be a compact two-sided surface of constant mean
curvature $H$, and let $\lambda_1$ stand for the first eigenvalue of its Jacobi operator. Then
\begin{itemize}
\item[(i)] $\lambda_1\leq-2H^2$, with equality if and only if $\m^2$ is a horizontal slice
$\s{2}(\kappa)\times\{t\}$;
\item[(ii)] $\displaystyle{
\lambda_1<-4H^2-\kappa-\frac{8\pi(g-1)}{\area}}.$
\end{itemize}
\end{theorem}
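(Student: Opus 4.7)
The plan is to derive both parts directly from the two general identities already established in the excerpt, exploiting the fact that $\tau=0$ in $\s{2}(\kappa)\times\R{}$.

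For part (i), I would follow the same pattern as in the proof of Theorem \ref{th1.1}(i): plug the constant function $f\equiv 1$ into the min-max characterization \rf{minmax}, but keep the Ricci term explicit rather than bounding it below by $2c$. Since $\tau=0$, formula \rf{Ric} reduces to $\RicN=\kappa(1-\g{N}{\xi}^2)\geq 0$, so writing $|A|^2=|\phi|^2+2H^2$ one obtains
\[
\lambda_1\leq -2H^2-\frac{1}{\area}\integ|\phi|^2-\frac{\kappa}{\area}\integ\bigl(1-\g{N}{\xi}^2\bigr)\leq -2H^2.
\]
Equality forces $|\phi|\equiv 0$ (so $\m$ is totally umbilic) and $\g{N}{\xi}^2\equiv 1$, that is $N=\pm\xi$ everywhere. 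Since $\tau=0$, the distribution $\xi^\perp$ is integrable and its leaves are the horizontal slices, so $\m$ must coincide with some $\s{2}(\kappa)\times\{t\}$. Conversely, each such slice is totally geodesic with $H=0$ and $N=\pm\xi$, which reduces $J$ to the Laplacian and gives $\lambda_1=0=-2H^2$.

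For part (ii), I would specialize the master identity \rf{lambda1Hom} to $\tau=0$ and rearrange to
\[
\lambda_1+4H^2+\kappa+\frac{8\pi(g-1)}{\area}=-\frac{1}{\area}\Bigl(\alpha+\kappa\integ\g{N}{\xi}^2\Bigr).
\]
Because $\alpha\geq 0$ and $\kappa>0$, the right-hand side is non-positive, and it vanishes only if simultaneously $\alpha=0$ and $\g{N}{\xi}\equiv 0$ on $\m$. The latter condition means that $\xi$ is everywhere tangent to $\m$, so $\m$ would have to be a Hopf surface $\Pi^{-1}(\gamma)$ for some curve $\gamma\subset\s{2}(\kappa)$.

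The step I expect to be the crux, and the only non-routine one, is ruling out this last possibility. In the product $\s{2}(\kappa)\times\R{}$ the Riemannian submersion $\Pi$ has fibers of the form $\{p\}\times\R{}$, each of which is non-compact; hence $\Pi^{-1}(\gamma)=\gamma\times\R{}$ can never be compact. This contradicts the compactness of $\m$, so $\integ\g{N}{\xi}^2>0$ and the inequality in (ii) is strict. It is precisely this non-compactness of the fibers that distinguishes the present case from the Berger sphere setting, where an analogous argument would have to be replaced by a different one because those fibers are circles.
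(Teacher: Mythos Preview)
Your argument is correct and follows essentially the same route as the paper's: part (i) via the test function $f\equiv 1$ together with $\RicN=\kappa(1-\g{N}{\xi}^2)\geq 0$, and part (ii) via the identity \rf{lambda1Hom} specialized to $\tau=0$. The only point of divergence is how strictness in (ii) is established: the paper notes that $\g{N}{\xi}\equiv 0$ would force $\m$ to be a Hopf torus, hence flat, and then invokes \cite{ToUr} for the non-existence of compact flat surfaces in $\s{2}(\kappa)\times\R{}$, whereas your observation that the fibers $\{p\}\times\R{}$ are non-compact gives the contradiction directly and avoids the external citation.
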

\begin{proof} Since $\tau=0$ and $\kappa>0$ from \rf{Ric} we know that
\[
\RicN=\kappa(1-\g{N}{\xi}^2)\geq0,
\]
so that \rf{lambda1.1} directly yields $\lambda_1\leq-2H^2$. Moreover, if the equality holds then
$\g{N}{\xi}^2\equiv1$ which implies that $\m^2$ is a totally geodesic horizontal slice.

On the other hand, since $\tau=0$ from \rf{lambda1Hom}, we get
\begin{eqnarray*}
\lambda_1&=&-4H^2-\kappa-\frac{1}{\area}\left(\alpha+8\pi(g-1)+\kappa\integ\g{N}{\xi}^2\right)\\
&\leq&-4H^2-\kappa-\frac{8\pi(g-1)}{\area},
\end{eqnarray*}
where we are taking into account that $\kappa>0$ and $\g{N}{\xi}^2\geq0$. In fact, equality holds
if and only if $\alpha=0$ and $\g{N}{\xi}\equiv0$. This last condition implies that $\m^2$ would be
a Hopf torus but that is not possible because there are no flat compact surfaces in
$\s{2}(\kappa)\times\R{}$ (see \cite{ToUr}).
\end{proof}
As a consequence, let us note that in $\s{2}(\kappa)\times\R{}$ if $H\neq0$ then $\lambda_1<0$ (so
the surface is not strongly stable) and from here we easily get that the only constant mean
curvature compact strongly stable surfaces in $\s{2}(\kappa)\times\R{}$ are horizontal slices
$\s{2}(\kappa)\times\{t\}$.
\begin{corollary}
The only strongly stable compact surfaces of constant mean curvature in $\s{2}(\kappa)\times\R{}$
are horizontal slices.
\end{corollary}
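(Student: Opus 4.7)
The plan is to extract this corollary essentially for free from Theorem \ref{ThS2R}(i), which already packages both the upper bound $\lambda_1\leq -2H^2$ and the rigidity statement that equality forces $\m^2$ to be a horizontal slice. The only observation needed is that the strong stability assumption $\lambda_1\geq 0$ pins all the relevant inequalities at equality simultaneously.

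Concretely, I would begin with $\psi:\m^2\fle\s{2}(\kappa)\times\R{}$ a compact two-sided CMC surface satisfying $\lambda_1\geq 0$. Chaining Theorem \ref{ThS2R}(i) with this hypothesis gives
\[
0\leq\lambda_1\leq -2H^2\leq 0,
\]
so $\lambda_1=-2H^2=0$ and in particular $H=0$. The equality clause of Theorem \ref{ThS2R}(i) then forces $\m^2$ to be a horizontal slice $\s{2}(\kappa)\times\{t\}$, which completes the nontrivial implication.

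For the converse I would briefly verify that such slices are actually strongly stable. A horizontal slice has unit normal $N=\pm\xi$ and is totally geodesic, so $|A|^2\equiv 0$; formula \rf{Ric} further yields $\RicN=\kappa(1-\g{N}{\xi}^2)=0$. Hence the Jacobi operator reduces to $J=\Delta$, whose first eigenvalue on a compact Riemannian manifold is $\lambda_1=0$, attained by constants. The horizontal slices are therefore (marginally) strongly stable and do coincide with the class singled out by the statement.

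There is essentially no obstacle here, since Theorem \ref{ThS2R}(i) has been tailored precisely so that the corollary drops out by bookkeeping. The only fine point worth flagging is that in the ambient space $\s{2}(\kappa)\times\R{}$ the upper bound $-2H^2$ is automatically non-positive, so the strong stability hypothesis cannot merely force $H=0$ without also saturating the rigidity inequality at the same time; it is this simultaneous saturation that triggers the horizontal-slice conclusion rather than some weaker umbilicity statement.
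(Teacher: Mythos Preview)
Your argument is correct and follows essentially the same route as the paper, which derives the corollary directly from Theorem~\ref{ThS2R}(i) by observing that $0\leq\lambda_1\leq-2H^2$ forces $H=0$ and saturates the equality case. You are a bit more explicit than the paper in verifying the converse (that horizontal slices satisfy $J=\Delta$ and hence $\lambda_1=0$), which the paper leaves implicit.
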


\begin{theorem}
\label{ThS2S1}
Let $\psi:\m^2\fle\s{2}(\kappa)\times\s{1}$ be a compact two-sided surface of constant mean
curvature $H$, and let $\lambda_1$ stand for the first eigenvalue of its Jacobi operator. Then
\begin{itemize}
\item[(i)] $\lambda_1\leq-2H^2$, with equality if and only if $\m^2$ is a horizontal slice
$\s{2}(\kappa)\times\{p\}$;
\item[(ii)] $\displaystyle{
\lambda_1\leq-4H^2-\kappa-\frac{8\pi(g-1)}{\area}}$
and equality holds if and only if $\m^2$ is a Hopf torus over a constant curvature closed curve.
\end{itemize}
\end{theorem}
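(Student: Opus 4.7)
The proof follows the same template as Theorem \ref{ThS2R}, with the crucial difference that in the quotient $\s{2}(\kappa)\times\s{1}$ flat compact surfaces do exist, so the equality cases in part (ii) are actually realized.

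For part (i), I would plug $f=1$ into the min-max characterization \rf{minmax} exactly as in \rf{lambda1.1}. Since $\tau=0$ and $\kappa>0$, formula \rf{Ric} gives $\RicN=\kappa(1-\g{N}{\xi}^2)\geq 0$, and the same two-step inequality in \rf{lambda1.1} yields $\lambda_1\leq -2H^2$. Equality forces both $|\phi|^2\equiv 0$ (so \m\ is totally umbilic) and $\g{N}{\xi}^2\equiv 1$ (so $N=\pm\xi$ along \m). A compact totally umbilic surface with vertical unit normal in $\s{2}(\kappa)\times\s{1}$ must be a horizontal slice $\s{2}(\kappa)\times\{p\}$, and conversely these slices are totally geodesic with $J=\Delta+2H^2$, giving $\lambda_1=-2H^2$.

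For part (ii), I would start from \rf{lambda1Hom} with $\tau=0$:
\[
\lambda_1=-4H^2-\kappa-\frac{1}{\area}\left(\alpha+8\pi(g-1)+\kappa\integ\g{N}{\xi}^2\right).
\]
Since $\kappa>0$, $\alpha\geq 0$ and $\g{N}{\xi}^2\geq 0$, the desired bound follows immediately. Equality forces $\alpha=0$ and $\g{N}{\xi}\equiv 0$ on \m. The second condition means $\xi$ is everywhere tangent to \m, so \m\ is a Hopf surface $\Pi^{-1}(\gamma)$ for some curve $\gamma$ in $\s{2}(\kappa)$. Because $\Pi:\s{2}(\kappa)\times\s{1}\fle\s{2}(\kappa)$ is a circle Riemannian submersion, compactness of \m\ forces $\gamma$ to be a closed curve, and \m\ is therefore a Hopf torus. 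It remains to see that $\gamma$ has constant curvature: since Hopf tori are flat, $g=1$ and $K=0$, so by the Gauss equation $|A|^2=2(2H^2+\KbarSigma)=4H^2$ (using $\KbarSigma=\tau^2+\g{N}{\xi}^2(\kappa-4\tau^2)=0$); then $|A|^2$ is constant, and together with $H$ constant this is equivalent to $\gamma$ having constant curvature (as recalled in the paragraph following the definition of a Hopf torus).

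For the converse, I would verify directly that if $\m=\Pi^{-1}(\gamma)$ is a Hopf torus over a closed curve of constant geodesic curvature, then \m\ is a flat compact CMC surface with $\RicN=\kappa$ and $|A|^2=4H^2$, so $J=\Delta+4H^2+\kappa$ has first eigenfunction $f=1$ and first eigenvalue $\lambda_1=-4H^2-\kappa$; since $g=1$ the boundary term $8\pi(g-1)/\area$ vanishes and equality holds. The main (minor) obstacle is the closedness argument identifying a compact Hopf surface with a Hopf torus and the bookkeeping to certify that the $\alpha=0$ condition together with CMC yields constant curvature of the base curve; both are clean because $\s{1}$-fibers are closed geodesics of common length.
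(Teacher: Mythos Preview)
Your proof is correct and follows essentially the same approach as the paper's, which simply refers back to the proof of Theorem~\ref{ThS2R} and notes that in $\s{2}(\kappa)\times\s{1}$ CMC Hopf tori now exist, verifying from \rf{lambda1Hom} that they satisfy $\lambda_1=-4H^2-\kappa$. Your only redundancy is the $|A|^2$ computation in the forward direction of (ii): once you know $\m$ is a Hopf torus and has constant $H$, the fact recalled in the paper (a Hopf torus has CMC iff $\gamma$ has constant curvature) already gives constant curvature of $\gamma$ directly, so the Gauss-equation step there is unnecessary, though it is still useful for the converse verification of $J=\Delta+4H^2+\kappa$.
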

\begin{proof}
The same as above but in this case for the equality in item (ii) we do have CMC Hopf tori.
Let us observe that from \rf{lambda1Hom} the first eigenvalue for such tori is
$\lambda_1=-4H^2-\kappa$ because of $g=1$, $\g{N}{\xi}\equiv0$ and $\alpha=0$.
\end{proof}

\begin{theorem}
\label{Th H2R}
Let $\psi:\m^2\fle\h{2}(\kappa)\times\R{}$ be a compact two-sided surface of constant mean
curvature $H$, and let $\lambda_1$ stand for the first eigenvalue of its Jacobi operator. Then
\begin{itemize}
\item[(i)] $\lambda_1<-2H^2-\kappa$;
\item[(ii)] $\displaystyle{
\lambda_1< -4H^2-2\kappa-\frac{8\pi(g-1)}{\area}}$.
\end{itemize}
\end{theorem}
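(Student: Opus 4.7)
The plan is to exploit the explicit form of the Ricci curvature in this ambient rather than the crude bound used in Theorem \ref{th1.1}. With $\tau=0$ and $\kappa<0$, equation \rf{Ric} gives $\RicN=\kappa(1-\g{N}{\xi}^2)$, which is non-positive, ranges in $[\kappa,0]$, and depends on the angle $\g{N}{\xi}^2\in[0,1]$. Applying Theorem \ref{th1.1} naively with $c=\kappa$ only yields $\lambda_1\leq-2H^2-2\kappa$ and $\lambda_1\leq -4H^2-4\kappa-8\pi(g-1)/\area$, both strictly weaker than what we want. The sharper bounds are obtained by keeping the $\g{N}{\xi}^2$ dependence visible; the lower bound $\g{N}{\xi}^2\geq 0$ will give part (i) and the upper bound $\g{N}{\xi}^2\leq 1$ will give part (ii).

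For part (i), I would take $f=1$ as a test function in \rf{minmax} and expand the Ricci term to obtain
\begin{equation*}
\lambda_1\leq -2H^2-\frac{1}{\area}\integ|\phi|^2-\frac{1}{\area}\integ\RicN = -2H^2-\kappa-\frac{1}{\area}\integ|\phi|^2+\frac{\kappa}{\area}\integ\g{N}{\xi}^2.
\end{equation*}
Since $|\phi|^2\geq 0$ and $\kappa\g{N}{\xi}^2\leq 0$, the non-strict inequality $\lambda_1\leq -2H^2-\kappa$ follows at once. To upgrade to strict inequality I would use that the vertical height function $t\circ\psi$ attains a maximum on the compact surface \m, at which the tangent plane of \m\ is perpendicular to $\xi$, so $\g{N}{\xi}^2=1$ there. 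Hence $\integ\g{N}{\xi}^2>0$ and the correction $\frac{\kappa}{\area}\integ\g{N}{\xi}^2$ is strictly negative.

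For part (ii), I would start from \rf{lambda1Hom} with $\tau=0$ and rearrange it as
\begin{equation*}
\lambda_1=-4H^2-2\kappa-\frac{8\pi(g-1)}{\area}-\frac{\alpha}{\area}+\frac{\kappa}{\area}\integ(1-\g{N}{\xi}^2),
\end{equation*}
so that both correction terms $-\alpha/\area$ and $\frac{\kappa}{\area}\integ(1-\g{N}{\xi}^2)$ are non-positive. The chief obstacle in this part, and the point where the argument differs most from the $\s{2}(\kappa)\times\R{}$ case, is the strictness: equality would force $\alpha=0$ together with $\g{N}{\xi}^2\equiv 1$, so $N=\pm\xi$ and $\m^2$ would be a horizontal slice $\h{2}(\kappa)\times\{t_0\}$. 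Such a slice is isometric to $\h{2}(\kappa)$ and therefore non-compact, so equality is impossible and the inequality is strict.
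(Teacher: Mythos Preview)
Your proof is correct and follows essentially the same approach as the paper: the same test function $f=1$ for part~(i), the same use of \rf{lambda1Hom} for part~(ii), and the same contradiction via non-compactness of slices for the strictness in~(ii). The only minor variation is in the strictness argument for part~(i): the paper notes that equality would force $\g{N}{\xi}\equiv 0$, making $\m$ a non-compact Hopf cylinder, whereas you produce a point with $\g{N}{\xi}^2=1$ via a critical point of the height function---both arguments are equally valid and amount to the same obstruction.
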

\begin{proof} In this case, since $\tau=0$ and $\kappa<0$ from \rf{Ric} we know that
\[
\RicN=\kappa(1-\g{N}{\xi}^2)\geq\kappa,
\]
so that \rf{lambda1.1} directly yields $\lambda_1\leq-2H^2-\kappa$. Moreover, the equality cannot hold; otherwise
we would have $\g{N}{\xi}^2\equiv0$ which implies that $\m^2$ is a cylinder which is not possible because it is compact.

On the other hand, since $\tau=0$, $\kappa<0$ and $\g{N}{\xi}^2\leq1$ we have from
\rf{lambda1Hom}
\[
\lambda_1\leq-4H^2-2\kappa-\frac{8\pi(g-1)}{\area},
\]
and equality holds if and only $\alpha=0$ and $\g{N}{\xi}^2\equiv1$. So $N=\pm\xi$ which implies
that the surface would be a slice of $\h{2}(\kappa)\times\R{}$ that is not compact.
\end{proof}

As a consequence, we derive the following result which is related to Corollary 4.1 in \cite{NR} (it is important to realize that
the notion of stability in Corollary 4.1 of \cite{NR}, as well as in Theorem C of the same reference, is that of weakly stability
although not explicitly stated).
\begin{corollary}
Let $\psi:\m^2\fle\h{2}(\kappa)\times\R{}$ be a compact two-sided surface of constant mean
curvature $H$.
\begin{itemize}
\item[(i)] There exists no strongly stable CMC surface with $H^2\geq-\kappa/2$.
\item[(ii)] If $\m^2$ is a strongly stable CMC surface and $H^2<-\kappa/2$,
then
\[
\area|2H^2+\kappa|> 4\pi(g-1).
\]
\end{itemize}
\end{corollary}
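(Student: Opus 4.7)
The plan is to read this off directly from Theorem~\ref{Th H2R} by imposing the strong stability condition $\lambda_1\geq 0$. Strong stability forces the first eigenvalue to be nonnegative, so any strict upper bound on $\lambda_1$ that is nonpositive immediately contradicts stability. Both items of the corollary are therefore obtained by rearranging the inequalities for $\lambda_1$ proved in Theorem~\ref{Th H2R}, observing the sign of $2H^2+\kappa$.

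For part~(i), the plan is to invoke item~(i) of Theorem~\ref{Th H2R}, which gives $\lambda_1<-2H^2-\kappa$. If $H^2\geq -\kappa/2$, then $-2H^2-\kappa\leq 0$, forcing $\lambda_1<0$ and contradicting $\lambda_1\geq 0$. Hence no such strongly stable CMC surface can exist.

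For part~(ii), I would apply item~(ii) of Theorem~\ref{Th H2R}, namely
\[
\lambda_1<-4H^2-2\kappa-\frac{8\pi(g-1)}{\area}.
\]
Strong stability $\lambda_1\geq 0$ then yields $-4H^2-2\kappa>8\pi(g-1)/\area$, i.e.
\[
\area\bigl(-2(2H^2+\kappa)\bigr)>8\pi(g-1).
\]
Since $H^2<-\kappa/2$ the quantity $2H^2+\kappa$ is negative, so $-2(2H^2+\kappa)=2|2H^2+\kappa|$. Dividing by $2$ gives $\area\,|2H^2+\kappa|>4\pi(g-1)$, which is the desired estimate.

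There is no real obstacle here: the result is a direct consequence of the bounds already established in Theorem~\ref{Th H2R} together with the definition of strong stability. The only small point worth being careful about is the strictness of the inequalities: Theorem~\ref{Th H2R} gives strict inequalities (because the would-be equality cases are ruled out by compactness in $\h{2}(\kappa)\times\R{}$), and that strictness is what produces the strict inequality $\area|2H^2+\kappa|>4\pi(g-1)$ in part~(ii) as opposed to a non-strict one.
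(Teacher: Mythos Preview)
Your proof is correct and follows exactly the approach intended by the paper: the corollary is obtained by combining the strict bounds of Theorem~\ref{Th H2R} with the strong stability condition $\lambda_1\geq 0$, just as Corollary~\ref{th1.3} is derived from Theorem~\ref{th1.1}. Your handling of the sign of $2H^2+\kappa$ and of the strictness of the inequalities is accurate.
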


We study now the cases where the bundle curvature $\tau\neq0$.

\begin{remark}
\label{rem1} When this happens, we know that $\{p\in\m^2:\g{N}{\xi}^2(p)=1\}=\{p\in\m^2:\xi(p)=\pm
N(p)\}$ has empty interior because the distribution $\langle\xi\rangle^\bot$ on $\E3$ is not
integrable (see \cite{ToUr}).
\end{remark}

We begin with the most simple case, that is, the Heisenberg group $\nil$ where $\kappa=0$.
\begin{theorem}
\label{Nil3}
Let $\psi:\m^2\fle\nil$ be a compact two-sided surface of constant mean curvature $H$, and let $\lambda_1$
stand for the first eigenvalue of its Jacobi operator. Then
\begin{itemize}
\item[(i)] $\lambda_1<-2(H^2-\tau^2)$;
\item[(ii)]$\displaystyle{
\lambda_1< -4(H^2-\tau^2)-\frac{8\pi(g-1)}{\area}}.$
\end{itemize}
\end{theorem}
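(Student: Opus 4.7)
The plan is to combine the two general formulas already established in the paper, \eqref{lambda1.1} and \eqref{lambda1Hom}, with the specific values $\kappa=0$ and $\tau\neq 0$ for \nil, and then rule out equality by appealing to Remark \ref{rem1} together with the non-compactness of Hopf surfaces in \nil.

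For part (i), I would plug $\kappa=0$ into formula \eqref{Ric} to get $\RicN = -2\tau^2 + 4\tau^2\g{N}{\xi}^2 \geq -2\tau^2$. Inserting this into the chain \eqref{lambda1.1} (with test function $f=1$) immediately yields
\[
\lambda_1 \leq -2H^2 - \frac{1}{\area}\integ\RicN - \frac{1}{\area}\integ|\phi|^2 \leq -2(H^2-\tau^2) - \frac{1}{\area}\integ|\phi|^2 \leq -2(H^2-\tau^2).
\]
For the strict inequality, equality would force simultaneously $|\phi|^2\equiv 0$ (so $\m$ totally umbilical) and $\g{N}{\xi}^2 \equiv 0$ (so that $\xi$ is tangent to $\m$ everywhere). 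The latter condition would force $\m$ to be contained in a Hopf surface $\Pi^{-1}(\gamma)$ over some curve $\gamma$ in $\R{2}$; but the fibers of $\Pi:\nil\fle\R{2}$ are copies of $\R{}$, so every Hopf surface in \nil\ is non-compact, contradicting compactness of $\m$.

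For part (ii), I would substitute $\kappa=0$ into \eqref{lambda1Hom} to obtain
\[
\lambda_1 = -4H^2 - \frac{1}{\area}\left(\alpha + 8\pi(g-1) - 4\tau^2\integ\g{N}{\xi}^2\right).
\]
Using $\g{N}{\xi}^2 \leq 1$ and $\alpha\geq 0$ gives the bound
\[
\lambda_1 \leq -4H^2 + 4\tau^2 - \frac{8\pi(g-1)}{\area} - \frac{\alpha}{\area} \leq -4(H^2-\tau^2) - \frac{8\pi(g-1)}{\area}.
\]
Equality would require $\alpha=0$ and $\g{N}{\xi}^2 \equiv 1$ on $\m$; but Remark \ref{rem1} says the set where $\g{N}{\xi}^2=1$ has empty interior (since the horizontal distribution is not integrable), so this is impossible. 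Hence the inequality is strict.

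The only delicate point — which is the step I'd call the main obstacle — is justifying strict inequality in each case, since the naive argument only gives $\leq$. In both items the strictness comes from a topological/integrability obstruction (no compact Hopf surfaces in the non-compact total space \nil, and the non-integrability of $\langle\xi\rangle^\perp$), so the proof reduces to a careful invocation of Remark \ref{rem1} and the structure of the Riemannian submersion $\Pi:\nil\fle\R{2}$ rather than any new computation.
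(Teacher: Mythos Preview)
Your proof is correct and follows the same computational framework as the paper. The only difference is in the strictness argument for part (i): the paper rules out equality by invoking the result of Souam--Toubiana \cite{SoTo} that there are no totally umbilic surfaces in \nil\ (since equality in \eqref{lambda1.1} forces $|\phi|^2\equiv 0$), whereas you instead exploit the other necessary condition for equality, $\g{N}{\xi}\equiv 0$, and argue that this would force $\m$ to be a non-compact Hopf surface. Your route is slightly more self-contained in that it avoids the external citation and parallels the argument the paper itself uses in Theorem~\ref{Th H2R} for $\h{2}(\kappa)\times\R{}$; the paper's route, on the other hand, aligns with the equality discussion in Theorem~\ref{th1.1}(i) and with the treatment of the Berger spheres in Theorem~\ref{thsb}. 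For part (ii) your argument coincides exactly with the paper's.
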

\begin{proof} Since $\kappa=0$, from \rf{Ric} we know that
\[
\RicN=2\tau^2(2\g{N}{\xi}^2-1)\geq-2\tau^2,
\]
so that \rf{lambda1.1} directly yields $\lambda_1\leq-2H^2+2\tau^2$. Moreover, the equality cannot hold because by the results in
\cite{SoTo} we know that there is no totally umbilic surfaces in \nil.

On the other hand, by \rf{lambda1Hom} we get
\begin{eqnarray*}
\lambda_1&=&-4H^2-\frac{1}{\area}\left(\alpha+8\pi(g-1)-4\tau^2\integ\g{N}{\xi}^2\right)\\
&\leq&-4H^2+4\tau^2-\frac{8\pi(g-1)}{\area}.
\end{eqnarray*}
If the equality holds then $\g{N}{\xi}\equiv 1$ and it is not possible because of Remark
\ref{rem1}.
\end{proof}

\begin{corollary}
Let $\psi:\m^2\fle\nil$ be a compact two-sided surface of constant mean
curvature $H$.
\begin{itemize}
\item[(i)] There exists no strongly stable CMC surface with $H^2\geq\tau^2$.
\item[(ii)] If $\m^2$ is a strongly stable CMC surface and $H^2<\tau^2$,
then
\[
\area|H^2-\tau^2|> 2\pi(g-1).
\]
\end{itemize}
\end{corollary}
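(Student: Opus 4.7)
The plan is to derive both items directly from the upper bounds for $\lambda_1$ obtained in Theorem \ref{Nil3}, in complete analogy with Corollary \ref{th1.3}. Since strong stability is equivalent to $\lambda_1\geq 0$, any strict upper bound on $\lambda_1$ that is non-positive immediately rules out strong stability, and any non-trivial upper bound on $\lambda_1$ translates into a geometric inequality.

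For item (i), I would argue as follows. Assume $\m^2$ is a strongly stable CMC surface in $\nil$ with $H^2\geq \tau^2$. Then $-2(H^2-\tau^2)\leq 0$, and Theorem \ref{Nil3}(i) gives the strict inequality
\[
\lambda_1<-2(H^2-\tau^2)\leq 0,
\]
so $\lambda_1<0$, contradicting strong stability. Hence no such surface can exist.

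For item (ii), suppose $\m^2$ is strongly stable with $H^2<\tau^2$, so that $|H^2-\tau^2|=\tau^2-H^2$. By Theorem \ref{Nil3}(ii),
\[
0\leq\lambda_1<-4(H^2-\tau^2)-\frac{8\pi(g-1)}{\area}=4|H^2-\tau^2|-\frac{8\pi(g-1)}{\area}.
\]
Multiplying through by $\area/4$ yields the desired strict inequality $\area|H^2-\tau^2|>2\pi(g-1)$.

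There is no real obstacle here: the work was done in Theorem \ref{Nil3}, and in particular the strictness in both bounds of that theorem (coming from the non-existence of totally umbilic surfaces in $\nil$ and from Remark \ref{rem1}) is exactly what allows the conclusions to be stated as sharp strict inequalities in this corollary. The only point worth noting explicitly in the write-up is that the sign of $H^2-\tau^2$ must be tracked carefully when rewriting it as $|H^2-\tau^2|$ in part (ii).
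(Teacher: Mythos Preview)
Your argument is correct and is exactly the intended one: the paper states this corollary without proof, treating it as an immediate consequence of Theorem~\ref{Nil3} in the same way that Corollary~\ref{th1.3} follows from Theorem~\ref{th1.1}. Your derivation of both items, including the use of the strict inequalities in Theorem~\ref{Nil3}, matches this precisely.
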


Now, let $\kappa$ be positive, thus,  $\E3=\sB$ is a Berger sphere.
For these homogeneous Riemannian manifold is very common to consider two different cases
based on the sign of $\kappa-4\tau^2$ since the obtained results are quite different.


\begin{theorem}
\label{thsb} Let $\psi:\m^2\fle\sB$ be a compact two-sided surface of constant mean curvature $H$,
and let $\lambda_1$ stand for the first eigenvalue of its Jacobi operator.
\begin{enumerate}
\item[(a)] If $\kappa-4\tau^2>0$ then

\vspace{2mm}
\begin{itemize}
\item[(i)] $\lambda_1<-2(H^2+\tau^2)$,
\item[(ii)] $\displaystyle{\lambda_1\leq -4H^2-\kappa-\frac{8\pi(g-1)}{\area}}$ and
equality holds if and only if $\m^2$ is a Hopf torus over a constant curvature closed curve.
\end{itemize}
\vspace{2mm}
\item[(b)] If $\kappa-4\tau^2<0$ then
\vspace{2mm}
\begin{itemize}
\item[(i)] $\lambda_1<-2H^2-\kappa+2\tau^2$,
\item[(ii)] $\displaystyle{\lambda_1< -4H^2-2\kappa+4\tau^2-\frac{8\pi(g-1)}{\area}}$.
\end{itemize}
\end{enumerate}
\end{theorem}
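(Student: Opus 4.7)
The plan is to apply the two tools already developed in earlier sections---the constant-function test-bound \rf{lambda1.1} and the exact identity \rf{lambda1Hom}---tracking carefully the sign of $\kappa-4\tau^2$. For each item (i), I would start from
\[
\lambda_1\leq -2H^2-\frac{1}{\area}\integ\RicN-\frac{1}{\area}\integ|\phi|^2
\]
and substitute the Berger-sphere Ricci identity \rf{Ric}: the sign of $4\tau^2-\kappa$ dictates whether the minimum of $\RicN$ is attained at $\g{N}{\xi}^2=1$ or at $\g{N}{\xi}^2=0$. For each item (ii), I would use the identity \rf{lambda1Hom} and lower-bound $(\kappa-4\tau^2)\integ\g{N}{\xi}^2$ by $0$ when $\kappa-4\tau^2>0$ and by $(\kappa-4\tau^2)\area$ when $\kappa-4\tau^2<0$.

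For case (a), $\kappa-4\tau^2>0$, the Ricci formula gives $\RicN\geq 2\tau^2$ with equality exactly where $\g{N}{\xi}^2=1$; by Remark \ref{rem1} this set has empty interior, so $\RicN>2\tau^2$ on a dense open subset of \m\ and one obtains the strict inequality $\lambda_1<-2(H^2+\tau^2)$. For (a)(ii), the inequality follows immediately from \rf{lambda1Hom} since the extra term is nonnegative. Equality forces $\alpha=0$ and $\g{N}{\xi}\equiv 0$; the second condition makes $\xi$ tangent to \m, so \m\ is a Hopf surface $\Pi^{-1}(\gamma)$, necessarily a torus by compactness of \m\ and $\kappa>0$, and the CMC hypothesis forces $\gamma$ to have constant geodesic curvature. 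Conversely, for such a Hopf torus one has $K=0$ and $\KbarSigma=\tau^2$, so the Gauss equation yields the constant value $|A|^2=4H^2+2\tau^2$; hence $J=\Delta+(4H^2+\kappa)$ has first eigenvalue $\lambda_1=-4H^2-\kappa$, which matches the bound since $g=1$.

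For case (b), $\kappa-4\tau^2<0$, the minimum of $\RicN$ is now $\kappa-2\tau^2$ (attained where $\g{N}{\xi}=0$), giving $\lambda_1\leq -2H^2-\kappa+2\tau^2$. Strict inequality splits into two subcases: if $\g{N}{\xi}\not\equiv 0$ on \m, the Ricci integral strictly exceeds $(\kappa-2\tau^2)\area$; if $\g{N}{\xi}\equiv 0$, then \m\ is a Hopf torus and Gauss (with $K=0$, $\KbarSigma=\tau^2$) yields $|\phi|^2=|A|^2-2H^2=2H^2+2\tau^2>0$, contributing strictly through the $|\phi|^2$ term. For (b)(ii), from $\g{N}{\xi}^2\leq 1$ and $\kappa-4\tau^2<0$ the identity \rf{lambda1Hom} gives the estimate, and equality would force $\g{N}{\xi}^2\equiv 1$, again ruled out by Remark \ref{rem1}.

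The main delicate point is the converse direction of (a)(ii): verifying that CMC Hopf tori over constant curvature curves genuinely realize equality. This hinges on the Gauss-equation computation of $|A|^2$ for such a Hopf torus and on the observation that the Jacobi operator then reduces to $\Delta$ plus a constant, whose first eigenvalue is the negative of that constant; existence of the tori themselves is guaranteed by the circle Riemannian submersion $\Pi:\sB\to\s{2}(\kappa)$. Everything else is careful bookkeeping of the signs dictated by $\kappa-4\tau^2$ and of the non-integrability of $\langle\xi\rangle^\bot$ recorded in Remark \ref{rem1}.
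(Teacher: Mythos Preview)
Your argument is correct and follows the same overall scheme as the paper: use \rf{lambda1.1} for items (i) and the identity \rf{lambda1Hom} for items (ii), tracking the sign of $\kappa-4\tau^2$. The only genuine difference lies in how you obtain \emph{strict} inequality in (a)(i) and (b)(i). The paper dispatches both at once by observing that equality in \rf{lambda1.1} would force $\phi\equiv 0$, and then invoking the result of Souam and Toubiana that Berger spheres admit no totally umbilic surfaces. You instead argue through the Ricci term: in (a)(i) you use Remark~\ref{rem1} to see that $\{\g{N}{\xi}^2=1\}$ has empty interior, so the integrated Ricci bound is strict; in (b)(i) you split into the cases $\g{N}{\xi}\not\equiv 0$ (where again the Ricci integral is strict) and $\g{N}{\xi}\equiv 0$ (where $\m$ is a Hopf torus and you compute $|\phi|^2=2H^2+2\tau^2>0$ directly). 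Both routes are valid; the paper's is shorter and uniform, while yours avoids the external citation at the cost of a case distinction. Your treatment of the converse in (a)(ii) is also slightly more explicit than the paper's, which simply refers back to the earlier computation in the proof of Theorem~\ref{ThS2S1}.
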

\begin{proof}(a) The proof of item (i) uses the fact that in this case
\[
\RicN\geq2\tau^2,
\]
which by \rf{lambda1.1} gives $\lambda_1\leq-2(H^2+\tau^2)$. Besides the equality cannot hold because of the non-existence of totally umbilic
surfaces in the Berger spheres \cite{SoTo}. On the other hand, since $\kappa-4\tau^2>0$, by \rf{lambda1Hom} we get
\[
\lambda_1\leq -4H^2-\kappa-\frac{8\pi(g-1)}{\area}.
\]
If the equality holds then $\m^2$ has to be a Hopf torus because of $\g{N}{\xi}\equiv0$ and reciprocally, any CMC Hopf torus satisfies the equality as we have seen before.

(b) In this case,
\[
\RicN\geq\kappa-2\tau^2,
\]
and \rf{lambda1.1} yields $\lambda_1\leq-2(H^2-\tau^2)-\kappa$. Again, equality cannot happen since there exist no totally umbilic surfaces.
Since $\kappa-4\tau^2<0$, by \rf{lambda1Hom} we have
\[
\lambda_1\leq -4H^2-2\kappa+4\tau^2-\frac{8\pi(g-1)}{\area},
\]
and equality holds if and only $\alpha=0$ and $\g{N}{\xi}^2\equiv1$ but again it is not possible.
\end{proof}

The corresponding applications to strongly stable surfaces in the Berger spheres have no sense because we know by
\cite{MPR} that there exist no such surfaces in these ambient spaces. Finally, for the case $\kappa<0$ we have the following result.

\begin{theorem}
Let $\psi:\m^2\fle\univC$ be a compact two-sided surface of constant mean curvature $H$, and let
$\lambda_1$ stand for the first eigenvalue of its Jacobi operator. Then
\begin{itemize}
\item[(i)] $\lambda_1<-2H^2-\kappa+2\tau^2$,
\item[(ii)] $\displaystyle{\lambda_1< -4H^2-2\kappa+4\tau^2-\frac{8\pi(g-1)}{\area}}$.
\end{itemize}
\end{theorem}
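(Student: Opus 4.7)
The plan is to follow the same template used in case (b) of Theorem \ref{thsb}, since the hypothesis $\kappa<0$ together with $\tau\neq 0$ forces $\kappa-4\tau^2<0$, exactly the sign regime that governs that proof. Thus I expect both estimates to come from the same two inputs: (a) a pointwise lower bound on \RicN\ read off from the curvature formula \rf{Ric}, used together with the elementary test-function inequality \rf{lambda1.1}; and (b) the explicit formula \rf{lambda1Hom} for $\lambda_1$, where the sign of $\kappa-4\tau^2$ decides in which direction one should estimate the integral $\integ\g{N}{\xi}^2$.

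For item (i), I would compute from \rf{Ric} that $\RicN=\kappa-2\tau^2+(4\tau^2-\kappa)\g{N}{\xi}^2$, and since $4\tau^2-\kappa>0$ the minimum over $\Sigma$ gives $\RicN\geq \kappa-2\tau^2$, attained exactly where $\g{N}{\xi}=0$. Feeding this into \rf{lambda1.1} with test function $f=1$ yields
\[
\lambda_1\leq -2H^2-\kappa+2\tau^2-\frac{1}{\area}\integ|\phi|^2\leq -2H^2-\kappa+2\tau^2.
\]
Equality would force both $|\phi|^2\equiv 0$ (i.e.\ $\m$ totally umbilic) and $\g{N}{\xi}\equiv 0$. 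The key step is to rule out the totally umbilic condition: this is provided by the Souam--Toubiana classification \cite{SoTo}, which implies that there are no compact totally umbilic surfaces in \univC, exactly as in the \nil\ and \sB\ cases handled earlier in the paper. This forces strict inequality.

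For item (ii), I would insert the bound $\g{N}{\xi}^2\leq 1$ into \rf{lambda1Hom}; since $\kappa-4\tau^2<0$, this gives $(\kappa-4\tau^2)\integ\g{N}{\xi}^2\geq (\kappa-4\tau^2)\area$, and combined with $\alpha\geq 0$ we obtain
\[
\lambda_1\leq -4H^2-\kappa-(\kappa-4\tau^2)-\frac{8\pi(g-1)}{\area}=-4H^2-2\kappa+4\tau^2-\frac{8\pi(g-1)}{\area}.
\]
Equality would require $\alpha=0$ and $\g{N}{\xi}^2\equiv 1$; but $\g{N}{\xi}^2\equiv 1$ is impossible by Remark \ref{rem1}, since the set on which $\g{N}{\xi}^2=1$ has empty interior whenever $\tau\neq 0$.

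The only non-routine ingredient I foresee is the invocation of the non-existence of compact totally umbilic surfaces in \univC; if that result is not directly available from \cite{SoTo} in the exact form needed, one could alternatively argue by the same empty-interior reasoning of Remark \ref{rem1} applied to the condition $\g{N}{\xi}\equiv 0$, provided that one checks that this forces $\m$ to be a Hopf surface (a Hopf surface in \univC\ is never compact since the base $\h{2}(\kappa)$ admits no closed geodesics of constant curvature bounding a compact lift). Either route closes the strict inequality in (i) and completes the proof in the same shape as Theorem \ref{thsb}(b).
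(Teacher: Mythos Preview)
Your proposal is correct and follows exactly the paper's approach: the paper's proof simply states that since $\kappa<0$ implies $\kappa-4\tau^2<0$, the argument is identical to part (b) of Theorem~\ref{thsb}, which is precisely the template you reproduce. Your primary route for ruling out equality in (i) via \cite{SoTo} matches the paper; the alternative Hopf-surface argument you sketch is unnecessary (and its justification is slightly off---the non-compactness of Hopf surfaces in \univC\ comes from the fiber being $\R{}$, not from the base), but this does not affect the validity of your main proof.
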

\begin{proof} Since $\kappa<0$ we get $\kappa-4\tau^2<0$, so the proof is the same that the part (b)
of the above theorem.
\end{proof}

\begin{corollary}
Let $\psi:\m^2\fle\univC$ be a compact two-sided surface of constant mean
curvature $H$.
\begin{itemize}
\item[(i)] There exists no strongly stable CMC surface with $H^2\geq\tau^2-\kappa/2$.
\item[(ii)] If $\m^2$ is a strongly stable CMC surface and $H^2<\tau^2-\kappa/2$,
then
\[
\area|H^2-\tau^2+\kappa/2|> 2\pi(g-1).
\]
\end{itemize}
\end{corollary}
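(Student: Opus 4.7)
The plan is to read both items directly off the bounds for $\lambda_1$ established in the theorem immediately preceding this corollary, combined with the fact that strong stability is equivalent to $\lambda_1\geq 0$. The two inequalities from that theorem can be rewritten in terms of the single quantity $H^2-\tau^2+\kappa/2$, namely
\[
\lambda_1<-2(H^2-\tau^2+\kappa/2)
\]
and
\[
\lambda_1<-4(H^2-\tau^2+\kappa/2)-\frac{8\pi(g-1)}{\area}.
\]

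For part (i), under the hypothesis $H^2\geq\tau^2-\kappa/2$ one has $H^2-\tau^2+\kappa/2\geq 0$, so the first rewritten inequality forces $\lambda_1<0$; hence $\m^2$ cannot be strongly stable. For part (ii), assume $H^2<\tau^2-\kappa/2$, so that $H^2-\tau^2+\kappa/2<0$ and therefore $|H^2-\tau^2+\kappa/2|=-(H^2-\tau^2+\kappa/2)$. Substituting into the second rewritten inequality and imposing $\lambda_1\geq 0$ gives
\[
0\leq\lambda_1<4|H^2-\tau^2+\kappa/2|-\frac{8\pi(g-1)}{\area},
\]
from which $\area\cdot|H^2-\tau^2+\kappa/2|>2\pi(g-1)$ follows by dividing by $4$ and rearranging.

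There is no serious obstacle here: the argument is a mechanical translation of the strict upper bounds of the preceding theorem under the stability hypothesis $\lambda_1\geq 0$, and it follows exactly the pattern of the analogous corollaries already proved in the section for $\h{2}(\kappa)\times\R{}$ and for $\nil$. The only point that deserves a moment of care is the strictness of the final inequality: the bound in the preceding theorem is strict because the simultaneous conditions $\g{N}{\xi}^2\equiv 1$ and $\alpha=0$ cannot hold (by Remark \ref{rem1}), and this strictness is what propagates to the strict inequality $\area\cdot|H^2-\tau^2+\kappa/2|>2\pi(g-1)$ rather than a non-strict one.
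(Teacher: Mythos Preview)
Your argument is correct and matches the paper's approach exactly: the corollary is left without explicit proof in the paper because it follows immediately from the strict bounds of the preceding theorem together with the characterization of strong stability as $\lambda_1\geq 0$, in precisely the pattern already used for the analogous corollaries after Theorems~\ref{Th H2R} and~\ref{Nil3}. Your remark on why the final inequality is strict is also accurate.
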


\bibliographystyle{amsplain}

\end{document}